\documentclass[12pt,a4paper,leqno]{article}
\usepackage{amsmath,amsthm,amssymb}

\newtheorem{theorem}{Theorem}[section]

\newtheorem{corollary}{Corollary}[section]
\newtheorem{proposition}{Proposition}[section]

\theoremstyle{definition}{%

\newtheorem{example}{Convention}[section]
\newtheorem{remark}{Remark}[section]}

\DeclareMathOperator{\GL}{GL}
\newcommand{\itl}[1]{{\em#1}}
\newcommand{\ffq}{\mathbb F_q}

\newcommand{\BS}{blocking set}
\newcommand{\PG}[2]{\mbox{{\rm PG}$(#1,#2)$}}
\newcommand{\notpar}{\mathbin{\not{}\hspace{-0.35ex}{\parallel}\hspace{0.35ex}}}

\begin{document}

\author{Andrea Blunck\footnotemark[1]\qquad
Hans Havlicek\footnotemark[2]\qquad Corrado
Zanella\footnotemark[3]}

\footnotetext[1]{Fachbereich Mathematik, Universit{\"a}t Hamburg,
Bundesstra{\ss}e 55, D-20146 Hamburg, Germany. e-mail \tt
andrea.blunck@math.uni-hamburg.de}

\footnotetext[2]{Hans Havlicek, Institut f{\"u}r Diskrete
Mathematik und Geometrie, Technische Universit{\"a}t Wien, Wiedner
Hauptstra{\ss}e 8--10, A-1040 Wien, Austria. e-mail \tt
havlicek@geometrie.tuwien.ac.at }

\footnotetext[3]{Corrado Zanella, Dipartimento di Tecnica e
Gestione dei Sistemi Industriali, Universit{\`a} di Padova,
Stradella S. Nicola, 3, I-36100 Vicenza, Italy. e-mail \tt
corrado.zanella@unipd.it }

\date{}

\title{Blocking sets in chain geometries}
\maketitle

\section{Introduction}

Let $R$ be a ring\footnote{The ring $R$ is assumed to be associative, with a unit element $1$,
which is inherited by subrings. The trivial case $1=0$ is excluded.}.
The \itl{projective line over $R$}, denoted by $ \mathbb{P}(R)$, is the set of all submodules of $R^2$ of type $R(a,b)$,
where $(a\ b)$ is the first row of some invertible $2\times 2$ matrix over $R$.

Suppose now that a field $K$ is contained in $R$, as a subring.
The (\itl{generalized}) \itl{chain geometry} associated with $K$ and $R$, denoted by $\Sigma(K,R)$, is the structure whose \itl{points} are the elements of $ \mathbb{P}(R)$ and whose blocks
(called \itl{chains}) are the sets $ \mathbb P(K)^g$ with $g\in\GL_2(R)$.
Here $ \mathbb P(K)$ is embedded in $ \mathbb P(R)$ by means of $K(a,b)\hookrightarrow R(a,b)$.
Roughly speaking, the chains are projective lines over $K$ contained in the projective line over $R$.

The classical example of a chain geometry is $\Sigma(\mathbb R,\mathbb C)$, or, by generalizing a little,
$\Sigma(K,R)$ where $R$ is a field and $[R:K]=2$.
In this case $\Sigma(K,R)$ is usually called \itl{Miquelian M\"obius plane}.

Two points $R(a,b)$ and $R(c,d)$ in $ \mathbb P(R)$ are called \itl{distant}, in symbols
\newline$R(a,b)\triangle R(c,d)$,
when $\begin{pmatrix}a&b\\ c&d\end{pmatrix}\in\GL_2(R)$.
We have $R(a,b)\triangle R(c,d)$ if, and only if, both points are on a common chain.
The group $\GL_2( R)$ acts transitively on the set of all triples of mutually distant points.

A \itl{\BS}\ in a geometry of points and blocks is a set, say $B$, of points, such that
every block contains at least
one element of $B$.
The most investigated question regarding the \BS s concerns their minimum size.
In this paper we give some basic results on this problem for a  finite chain geometry.
More precisely, in Section \ref{examples}, two examples of chain geometries are given.
Section \ref{sizes} is concerned with the number $\lambda_i$ of blocks containing $i$ given
mutually distant points, $i=0,1,2,3$.
In Section \ref{bounds},
lower bounds for the size of a \BS\ in $\Sigma(K,R)$ are given both
in the general case (see (\ref{elf}))
and in case $R$ is a local ring (Theorem \ref{principale}).
Two examples attaining the general lower bound are exhibited.
It is also shown that it is possible to construct \BS s in chain geometries, starting from a
\BS\  in a M\"obius geometry (Theorem \ref{ponte}).

\section{Examples of chain geometries}\label{examples}

We give a short description of two classes of chain geometries we will deal with in this paper.

\begin{example}
Let $R$ be the direct product $K\times K$.
Then $R$ has precisely two nontrivial ideals: $K(1,0)$ and $K(0,1)$.
The ring $R$ becomes a $K$-algebra via the embedding $x\hookrightarrow(x,x)$ of $K$ into $R$.
A submodule $R(a,b)$ of $R^2$ is a point if and only if $a$ and $b$ do not belong to a common nontrivial ideal.
Let $x_0$, $x_1$, $x_2$ and $x_3$ denote the homogeneous coordinates in PG$(3,K)$.
For $a,b\in R$ set $a=(a_1,a_2)$, $b=(b_1,b_2)$.
The map $\psi:\mathbb P(R)\rightarrow\PG 3K$ defined by
\[
  R(a,b)^\psi=K(a_1b_2,a_2b_1,a_1a_2,b_1b_2)
\]
is a bijection between $\mathbb P(R)$ and the hyperbolic quadric $\cal Q$ in \PG 3K\ of equation $x_0x_1-x_2x_3=0$.
The image of $\mathbb P(K)$ under $\psi$ is the intersection of $\cal Q$ with the plane $x_0-x_1=0$.
Since a plane of equation $u_0x_0+u_1x_1+u_2x_2+u_3x_3=0$ is tangent to $\cal Q$ if and only if $u_0u_1-u_2u_3=0$, we
see that $\mathbb P(K)^\psi$ is a nondegenerate conic.
On the other hand, the points which are non-distant from $R(1,0)$ are precisely those on the tangent
plane section given by $x_3=0$.
Next, the mapping $\varphi:\,(\GL_2(K))^2\rightarrow\GL_2(R)$ defined by setting, for every $M,N\in\GL_2(K)$,
\[
  {((a_1,a_2)\ (b_1,b_2))}\,{(M,N)^\varphi}=((\alpha_1,\alpha_2)\ (\beta_1,\beta_2)),
\]
\[
  (\alpha_1\ \beta_1)=(a_1\ b_1)M, \quad (\alpha_2\ \beta_2)=(a_2\ b_2)N,
\]
is an isomorphism between the direct product of $\GL_2(K)$ with itself  and $\GL_2(R)$.
For $M=\begin{pmatrix}m_1&m_2\\ m_3&m_4\end{pmatrix}\in\GL_2(K)$, the action of $(M,1)^\varphi$ on $\cal Q$
is the restriction of the projectivity of PG$(3,K)$ defined by
\[
  (x_0\ x_1\ x_2\ x_3)\ \mapsto\ (x_0\ x_1\ x_2\ x_3)\,
  \begin{pmatrix} m_1&0&0&m_2\\ 0&m_4&m_3&0\\ 0&m_2&m_1&0\\ m_3&0&0&m_4\end{pmatrix},
\]
that fixes $\cal Q$.
A similar property is satisfied by $(1,N)^\varphi$ where $N\in\GL_2(K)$.
It easily follows that (\itl i) the action of each element of $\GL_2(R)$ on $\cal Q$ can be represented by an element
of PGL$(4,K)$ fixing $\cal Q$, (\itl{ii}) the images of the chains under the embedding $\psi$ are precisely
the nondegenerate conics contained in $\cal Q$, and (\itl{iii}) two distinct points $p,q\in\mathbb P(R)$
are distant if,
and only if, the line through $p^\psi$ and $q^\psi$ is not contained in $\cal Q$.

More generally, if $R$ is a kinematic algebra, i.e.\ for each $x\in R$ two elements $k,l\in K$ exist such that
$x^2=kx+l$, and  $K\neq\mathbb F_2$, then the points of $\Sigma(K,R)$ can be represented as points of a
quadric ${\cal Q}'$ in a projective space over $K$ of suitable dimension,
and distant points correspond to points that are not conjugate with respect to ${\cal Q}'$ \cite{Hot76}.
See also \cite[Section 6.2]{Herzer}.
\end{example}

\begin{example}
Let $R$ be a local ring, and let $R^*$ be the set of all units in $R$.
Each point, say $R(a,b)$, of the projective line $\mathbb P(R)$ has the property that at least one
of the two elements $a$, $b$ is invertible.
Because since $R\setminus R^*$ is an ideal the existence of an inverse matrix
$\begin{pmatrix} x&*\\ y&*\end{pmatrix}$ would otherwise lead to the contradiction $1=ax+by\in R\setminus R^*$.
So $\mathbb P(R)$ is the disjoint union
\begin{equation}\label{disjunion}
  \mathbb P(R)=\left\{R(x,1)|\,x\in R\}\cup\{R(1,z)|\,z\in R\setminus R^*\right\}.
\end{equation}
In this case the complementary relation of $\triangle$, which we will denote by $\|$ (\itl{parallelism}), is an equivalence relation.
More explicitly, this means for arbitrary $x,y\in R$, $z,w\in R\setminus R^*$:
\begin{equation}\label{morexpl}
  R(1,z)\|R(1,w);\ R(x,1)\notpar R(1,z);\ \left(R(x,1)\|R(y,1)\Leftrightarrow x-y\in R\setminus R^*\right).
\end{equation}
Using the description in (\ref{morexpl}) one can easily see that $\|$ in fact is an equivalence relation.
\end{example}

\section{Finite chain geometries}\label{sizes}

From now on we assume that $R$ is finite. So, $K=\mathbb F_q$, $q$ a prime
power, and $R$ is in a natural way a left vector space over $\mathbb F_q$.
Define $d=\dim_{\mathbb F_q}R$. Since $\GL_2( R)$ acts transitively on the
triples of mutually distant points, the number of chains containing $i$ given
mutually distant points, $i=0,1,2,3$, is a constant, say $\lambda_i$. The
problem to determine the numbers $\lambda_0$, $\ldots$, $\lambda_3$ is
intricate. However, to our purposes it is enough to describe their ratios.
\begin{proposition}
  Let $v$ be the number of points of $\Sigma(\mathbb F_q,R)$.
  Denote by $R^*$ the set of units of $R$, and let $\#R=q^d$, $\#R^*=r^*$.
  Then
  \begin{eqnarray}
    \label{ela}\lambda_0&=&\frac{vq^{d-1}r^*}{q^2-1}\lambda_3;\\
    \label{elb}\lambda_1&=&\frac{q^{d-1}r^*}{q-1}\lambda_3;\\
    \label{elc}\lambda_2&=&\frac{r^*}{q-1}\lambda_3.
  \end{eqnarray}
\end{proposition}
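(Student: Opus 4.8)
The plan is to count in two ways the incidences between chains and ordered tuples of mutually distant points. Write $N=\lambda_0$ for the total number of chains. Each chain is an image $\mathbb P(\ffq)^g$ with $g\in\GL_2(R)$ and hence carries exactly $q+1$ points; these are pairwise distant, since two distinct points of $\mathbb P(\ffq)$ lie on the common chain $\mathbb P(\ffq)$ and are therefore distant, and distantness is preserved by $\GL_2(R)$. Thus a single chain contains $q+1$ points, $(q+1)q$ ordered distant pairs, and $(q+1)q(q-1)$ ordered distant triples.

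First I would determine the numbers $T_1,T_2,T_3$ of ordered $i$-tuples of mutually distant points in $\mathbb P(R)$. Obviously $T_1=v$. For $T_2$ note that $\GL_2(R)$ is transitive on ordered distant pairs: if $(a\ b)$ and $(c\ d)$ are the two rows of $M\in\GL_2(R)$, then $M^{-1}$ carries $(R(a,b),R(c,d))$ to $(R(1,0),R(0,1))$. Hence it suffices to count the points distant from $R(1,0)$. Left-multiplying $\begin{pmatrix}1&0\\ a&b\end{pmatrix}$ by the invertible matrix $\begin{pmatrix}1&0\\ -a&1\end{pmatrix}$ shows this matrix is invertible exactly when $b\in R^*$, and then $R(a,b)=R(b^{-1}a,1)$; so the points distant from $R(1,0)$ are precisely the $q^d$ points $R(x,1)$, $x\in R$, giving $T_2=v\,q^d$. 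For $T_3$ I count, for the fixed pair $(R(1,0),R(0,1))$, the third points distant from both: such a point has the form $R(x,1)$, and $\begin{pmatrix}0&1\\ x&1\end{pmatrix}$ is invertible precisely when $x\in R^*$, so there are $r^*$ of them and $T_3=T_2\,r^*=v\,q^d r^*$.

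Double counting the incidences then gives
\[
  v\,\lambda_1=N(q+1),\qquad v q^d\,\lambda_2=N(q+1)q,\qquad v q^d r^*\,\lambda_3=N(q+1)q(q-1),
\]
together with $\lambda_0=N$. Solving the last relation for $N$ and substituting into the expressions for $\lambda_0$, $\lambda_1$, $\lambda_2$ eliminates the unknown $N$ and yields (\ref{ela})--(\ref{elc}) after writing $q^2-1=(q+1)(q-1)$.

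The structural input is only the transitivity of $\GL_2(R)$ on triples (hence on pairs) of mutually distant points, which is already available and also guarantees that the $\lambda_i$ are well defined. The part requiring care is the bookkeeping of $T_2$ and $T_3$: one must apply the invertibility criterion for $2\times2$ matrices over the possibly noncommutative ring $R$ and normalize points by left unit factors correctly. Everything else is elementary division of the four counting relations.
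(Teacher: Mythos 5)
Your proof is correct and follows essentially the same route as the paper: both rest on the same structural facts (each chain has $q+1$ pairwise distant points, a point is distant from exactly $q^d$ points, two distant points have exactly $r^*$ common distant neighbours, and $\GL_2(R)$-transitivity) and the same double-counting of point--chain incidences. The only difference is bookkeeping: you count incidences globally over all ordered $i$-tuples and eliminate the total number of chains $N$, whereas the paper fixes a tuple of mutually distant points and counts locally, relating consecutive $\lambda_i$'s directly; dividing your three relations pairwise recovers exactly the paper's local counts.
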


\begin{proof}
  The points which are distant from $R(1,0)$ are precisely those in the
form $R(a,1)$, $a\in R$; since they are all distinct and $\GL_2( R)$
acts transitively on $\mathbb P(R)$,
we have that each point in $\mathbb P(R)$ is distant from precisely $q^d$ points.
Similarly, the points which are distant from both $R(1,0)$ and $R(0,1)$ form the set $\{R(1,a)|\,a\in R^*\}$.
There are $r^*$ of such points.

Assume now that $p_1$ and $p_2$ are two distant points.
Each chain has precisely $q+1$ points.
So, by counting in two ways
the number $M$ of pairs $(p,C)$, where $p$ is a point distant from both $p_1$ and $p_2$, and $C$ is a chain through $p$, $p_1$ and $p_2$, we obtain $M=r^*\lambda_3=(q-1)\lambda_2$ and this gives (\ref{elc}).
A similar argument yields (\ref{elb}) and (\ref{ela}).
\end{proof}

For sake of completeness we mention that $\lambda_3=r^*/\#N$, where $N=\{n\in R^*|\,n^{-1}K^*n=K^*\}$ is the normalizer
of $K^*$ in $R^*$.
See e.g. \cite{Ha04}.

Since all points described in (\ref{disjunion}) are in $\mathbb P(R)$, even if $R$ is not local, we see that in general
\begin{equation}\label{eld}
  v\ge2q^d-r^*.
\end{equation}

\section{Blocking sets}\label{bounds}

A \itl{blocking set}\ in $\Sigma(\ffq,R)$ is a set $B$ of points, such that every chain contains at least one element of $B$.
A trivial lower bound for the size of $B$, holding in each geometry where the number of blocks through a point is a constant, is
\begin{equation}\label{ele}
  \#B\ge\frac{\lambda_0}{\lambda_1},
\end{equation}
whence, by (\ref{ela}), (\ref{elb}), and (\ref{eld}),
\begin{equation}\label{elf}
  \#B\ge\left\lceil\frac{2q^d-r^*}{q+1}\right\rceil.
\end{equation}
The question arises, whether $(\ref{elf})$ can be improved for all $\Sigma(\ffq,R)$ due to its algebraic
definition in terms of $\mathbb F_q$ and $R$.
The answer is negative: take the geometric model $Q^+(3,q)$ of $\Sigma(\ffq,\ffq\times\ffq)$.
Since each line of $Q^+(3,q)$ is a \BS, we see that (\ref{elf}) is sharp.
A further example of a \BS\ for which in (\ref{elf}) the equality holds will be dealt with in case (\itl i)
of theorem \ref{principale}.
For this reason we have to investigate \BS s in particular chain geometries.

The case in which $\Sigma(\ffq,R)$ is a M\"obius plane has been dealt with in \cite{BrRo85,Gl82,KiMaPa05}
(actually,  the results in these papers hold for arbitrary 3-$(q^2+1,q+1,1)$-designs).
In the quoted papers it is proved that if $B$ is a blocking set in a M\"obius plane of
order $q$, then
\begin{equation}\label{inv}
  \#B\geq 2q-1;
\end{equation}
furthermore, $\#B\geq2q$ for $q\geq4$.
Examples of blocking sets attaining the lower bounds are known only
for $q\leq5$ and were found and classified by means of a computer
search \cite{KiMaPa05}.
If more generally $R$ is a local ring we can give a generalization of (\ref{inv}) for sufficiently
large $q$.
To this end we use a polynomial of constant sign introduced in \cite{Gl82}.

Let $\theta_n=(q^{n+1}-1)/(q-1)$ for $n\in\mathbb N\cup\{-1\}$.
\begin{theorem}\label{principale}
  Let $B$ be a blocking set in the chain geometry $\Sigma(\ffq,R)$, where $R$ is a local ring,
  and let
  \begin{equation}\label{delta}
    d=\dim_{\mathbb F_q}R,\qquad \delta=\dim_{\mathbb F_q}(R\setminus R^*),
  \end{equation}
  where $R^*$ denotes the set of units of $R$.
  Then\\
  {\rm ({\em i\/})} if $\delta=d-1$, then $\#B\geq q^{d-1}$; the equation
  $\#B=q^{d-1}$ holds if, and only if, $B$ is a parallel class;\\
  {\rm ({\em ii\/})} if $d>2$, $\delta=d-2$ and $\varepsilon>0$, then
  $\#B>2q^{d-1}-(\frac72+\varepsilon)q^{d-2}$ for $q$ sufficiently
  large;\\
  {\rm ({\em iii\/})} if $d>2$, $\delta<d-2$ and $\varepsilon>0$, then
  $\#B>2q^{d-1}-(1+\varepsilon)q^{d-2}$ for $q$ sufficiently
  large.
\end{theorem}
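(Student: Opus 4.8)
For part~(i) the plan is to extract everything from the trivial bound (\ref{ele}). Since $R$ is local with $\delta=d-1$, the disjoint decomposition (\ref{disjunion}) gives $v=q^{d}+q^{d-1}=q^{d-1}(q+1)$, so by (\ref{ela}) and (\ref{elb}) one has $\lambda_0/\lambda_1=v/(q+1)=q^{d-1}$, whence $\#B\ge q^{d-1}$. To settle the equality case I would use that $\|$ has exactly $q^{d-\delta}+1=q+1$ classes, each of cardinality $q^{\delta}=q^{d-1}$, and that the $q+1$ points of any chain are mutually distant, hence lie in $q+1$ \emph{pairwise different} classes; as there are only $q+1$ classes, every chain meets each class in exactly one point. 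This shows at once that any parallel class is a blocking set of size $q^{d-1}$, so the bound is attained. Conversely, equality in (\ref{ele}) forces $\#(C\cap B)=1$ for every chain $C$; were $B$ to meet two different classes, two of its points lying in different classes would be distant (different classes $\Leftrightarrow\triangle$) and would thus share a chain, which would then contain two points of $B$ --- a contradiction. Hence $B$ lies in one class and, having $q^{d-1}$ points, coincides with it.

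For parts (ii) and (iii) the bound (\ref{ele}) only gives order $q^{d-1}$, so the factor $2$ must come from a residue argument in the spirit of the M\"obius estimate (\ref{inv}). Writing $\mathfrak m=R\setminus R^{*}$ for the maximal ideal ($\dim_{\mathbb F_q}\mathfrak m=\delta$), I would first fix a point $P\notin B$ and, using the transitivity of $\GL_2(R)$, normalise $P=R(1,0)$. Every chain through $P$ has its remaining $q$ points among the affine points $R(x,1)$, $x\in R$, which I identify with $\mathrm{AG}(d,q)=R$; in these coordinates such a chain is a coset of a $1$-dimensional $\mathbb F_q$-subspace $L$ all of whose nonzero elements are units, i.e. a line whose direction avoids $\mathfrak m$. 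Since $P\notin B$, the set $B_{\mathrm{aff}}=B\cap R$ must already meet every such line. Thus $\#B\ge\#B_{\mathrm{aff}}$ is bounded below by the minimum size of a set in $\mathrm{AG}(d,q)$ meeting every line whose direction lies outside the $\delta$-dimensional subspace $\mathfrak m$.

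I would then estimate this affine quantity by the polynomial of constant sign of \cite{Gl82}, applied to the complement $A=R\setminus B_{\mathrm{aff}}$: by construction every line contained in $A$ has its direction inside $\mathfrak m$, and the polynomial method turns this restriction into an upper bound $\#A\le q^{d}-2q^{d-1}+O(q^{d-2})$, i.e. $\#B\ge 2q^{d-1}-O(q^{d-2})$. The two regimes are separated by the size $q^{\delta}$ of a fibre of the projection $R\to R/\mathfrak m\cong\mathbb F_{q^{\,d-\delta}}$. When $\delta=d-2$ the quotient is $\mathbb F_{q^{2}}$ and the induced structure is the Miquelian M\"obius plane, so the extremal configurations are near-unions of full $\mathfrak m$-cosets projecting to a line-free set of $\mathrm{AG}(2,q)$; here the fibres have the full size $q^{d-2}$ and contribute a second-order term which I expect to work out to the constant $7/2$. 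When $\delta<d-2$ the fibres are of lower order $q^{\delta}=o(q^{d-2})$, the same correction terms become negligible, and the constant improves to $1$.

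The main obstacle will be the sharp second-order analysis of the affine bound. One has to verify that the family of chains through $P$ is exactly the family of lines with direction outside $\mathfrak m$ (checking the unit-direction description, including the mild non-commutative bookkeeping hidden in $\lambda_3=r^{*}/\#N$), run the constant-sign polynomial so that it yields not merely the leading term $2q^{d-1}$ but the precise coefficients $7/2$ and $1$ of $q^{d-2}$, and control the resulting error terms in $r^{*}=q^{d}-q^{\delta}$ and $v=q^{d}+q^{\delta}$ well enough to justify the phrase ``for $q$ sufficiently large''. Extracting these constants from the polynomial inequality --- rather than the qualitative factor $2$ --- is where the real work lies.
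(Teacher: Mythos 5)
Your part (i) is correct and complete. You derive the bound from (\ref{ele}) just as the paper does via (\ref{elf}), and your equality analysis is a valid (in fact slightly cleaner) variant of the paper's: you observe that $\#B=\lambda_0/\lambda_1$ forces every chain to meet $B$ in exactly one point, so two points of $B$ in different parallel classes would be distant and hence lie on a common chain, a contradiction; the paper instead takes $p\notin B$ and shows $\#(B\setminus[p])\ge\lambda_1/\lambda_2=q^{d-1}$, whence $[p]\cap B=\emptyset$. You also verify explicitly that a parallel class is a blocking set, a direction the paper leaves implicit.

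For parts (ii) and (iii), however, there is a genuine gap: the entire quantitative content of the theorem is deferred rather than proved. After reducing to the derived affine structure at a point $P\notin B$ (lines of $\mathrm{AG}(d,q)$ with direction outside the maximal ideal $\mathfrak m$), you assert that ``the polynomial method'' yields $\#A\le q^d-2q^{d-1}+O(q^{d-2})$ for the complement, and that the constants $\frac72$ and $1$ will ``work out'' from a fibre analysis --- but no inequality is derived, and you concede that this is where the real work lies. Glynn's sign polynomial is not a tool for bounding line-free sets in affine space; it is an incidence-counting device, and the paper uses it globally, never localizing at a point. With $n_i$ the number of chains meeting $B$ in $i$ points, the paper combines four counting relations (\ref{car1})--(\ref{car4}) --- the chain count $\lambda_0$, the incidence count $x\lambda_1$, the distant-pair count with the crucial correction $x(x-q^\delta)\lambda_2$ accounting for parallel pairs inside $B$, and the triple count bounded by $x(x-1)(x-2)\lambda_3$ --- through the non-negativity of $P(i)=(i-1)(i-3)(i-4)$ into the cubic inequality (\ref{polinomione}). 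Substituting $x=2q^{d-1}-kq^{d-2}$ leaves leading terms $(4-4k)q^{3d-4}+10q^{2d+\delta-2}$, and the dichotomy in the statement is exactly whether $2d+\delta-2$ equals $3d-4$ (i.e.\ $\delta=d-2$, which only forces $k\le\frac72$) or is smaller (i.e.\ $\delta<d-2$, forcing $k\le1$). Your intuition that $\frac72$ reflects fibres of full size $q^{d-2}$ is right in spirit --- the term $10q^{2d+\delta-2}$ is precisely the parallel-class correction in the pair count --- but your affine set-up supplies no mechanism that produces it.

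It is worth noting that your reduction could be completed by an entirely different elementary step, at least when $K$ is central in $R$ (e.g.\ $R$ commutative): since $\delta\le d-2$, one can choose a $2$-dimensional $\mathbb F_q$-subspace $W$ with $W\cap\mathfrak m=\{0\}$; every line inside a coset of $W$ then has unit direction, hence is the affine part of a chain through $P$, and applying the Jamison/Brouwer--Schrijver bound $2q-1$ for line-blocking sets of $\mathrm{AG}(2,q)$ in each of the $q^{d-2}$ cosets gives $\#B\ge q^{d-2}(2q-1)=2q^{d-1}-q^{d-2}$ for all $q$, stronger than both (ii) and (iii). But that argument invokes a theorem the paper does not use, requires centrality of $K$ to identify the affine chains $uKw+v$ ($u,w\in R^*$) with unit-direction lines --- exactly the ``non-commutative bookkeeping'' you mention, which is not mild, since $uKw$ need not even be an $\mathbb F_q$-subspace otherwise --- and, in any case, it is not the argument you sketched.
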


\begin{proof}
For each point $p$, let $[p]$ denote the related parallel class.
We have $\#[p]=q^{\delta}$.

({\em i\/})
The first assertion follows from (\ref{elf}).

Now assume that $B$ is a \BS\  such that $\#B=q^{d-1}$.
Let $p$ be a point outside $B$.
Every chain through $p$ intersects $B$, so $B\setminus[p]$
contains at least $\lambda_1/\lambda_2=q^{d-1}$ points.
Therefore $[p]\cap B=\emptyset$.
This holds for any point not in $B$, so $B$ is a parallel class.


({\em ii\/}), ({\em iii\/})
Let $x=\#B$.
Denote by $n_i$ the number of chains meeting $B$ in exactly $i$
points. Since $B$ is a \BS, $n_0=0$.
By (\ref{ela}), taking into account $v=q^d+q^\delta$, we have
\begin{equation}\label{car1}
  \sum_{i\ge1}n_i=\lambda_0=\left(q^{2(d-\delta-1)}+q^{2(d-\delta-2)}+
  \cdots+q^2+1\right)q^{d+2\delta-1}\lambda_3.
\end{equation}
Computing in two ways the number of the ordered pairs $(p,C)$, $C$
a chain, $p\in B\cap C$, we obtain
\begin{equation}\label{car2}
  \sum_{i\ge1}in_i=x\lambda_1=xq^{d+\delta-1}\theta_{d-\delta-1}\lambda_3.
\end{equation}
Analogously, by taking into account the ordered triples $(p_1,p_2,C)$ and
quadruples $(p_1,p_2,p_3,C)$, where the $p_i$s are distinct points of $B$
incident with $C$, we have
\begin{equation}\label{car3}
  \sum_{i\ge1}i(i-1)n_i\ge x(x-q^{\delta})\lambda_2=
  x(x-q^{\delta})\theta_{d-\delta-1}q^{\delta}\lambda_3;
\end{equation}
\begin{equation}\label{car4}
  \sum_{i\ge1}i(i-1)(i-2)n_i\le x(x-1)(x-2)\lambda_3.
\end{equation}
The polynomial
\[
  P(i)=(i-1)(i-3)(i-4)=i(i-1)(i-2)-5i(i-1)+12i-12,
\]
introduced in \cite{Gl82}, is non-negative for all positive integers $i$. From
(\ref{car1})--(\ref{car4}), it follows
\begin{eqnarray}
  \mbox{{}\qquad}0&\le&\frac1{\lambda_3}\sum_{i\ge1}n_iP(i)\le\nonumber\\
  &\le&x(x-1)(x-2)-5x(x-q^{\delta})\theta_{d-\delta-1}q^{\delta}+
  12xq^{d+\delta-1}\theta_{d-\delta-1}\label{polinomione}\\
  &&-12\left(q^{2(d-\delta-1)}+q^{2(d-\delta-2)}+
  \cdots+q^2+1\right)q^{d+2\delta-1}.\nonumber
\end{eqnarray}
Assume
\begin{equation}\label{sostituz}
  x=2q^{d-1}-kq^{d-2}.
\end{equation}
Since $d>2$, by (\ref{polinomione}) and (\ref{sostituz}) we obtain
\begin{equation}\label{lterms}
  0\le(4-4k)q^{3d-4}+10q^{2d+\delta-2}+\mbox{ (terms of degree $<3d-4$)}
\end{equation}
and this implies ({\em ii\/}) and ({\em iii\/}).
\end{proof}

\begin{remark}
In the previous proof, actually
only the combinatorial structure of $\Sigma(\mathbb{F}_q,R)$ is
essential, and such structure is a
3-$(q^{\delta},q+1,\lambda_3)$-divisible design with
$q^d+q^{\delta}$ points.
See \cite{Ha04} for generalities on divisible designs.
\end{remark}
\begin{remark}
A proposition like ({\em i\/}),
characterizing some geometric configurations as blocking sets of
minimum size, is often called a {\em Bose-Burton type theorem.}
\end{remark}
\begin{remark}
If $K$ and $F$ are fields with $K\subseteq F$ and $[F:K]=d$, then $\Sigma(K,F)$ is
called a $d$-\itl{dimensional M\"obius geometry over $K$}.
Theorem \ref{principale} gives in particular a lower bound for the blocking sets in the
finite M\"obius geometries.
\end{remark}
\begin{remark}
In case the term of degree $3d-4$ in (\ref{lterms}) vanishes, the term of degree $3d-5$
always turns out to be positive, with one exception given by $d=3$ and $\delta=0$.
By substituting $x=2q^2-q+t$ in (\ref{polinomione}) we obtain
\begin{eqnarray}
  0&\le&(-1+4t)q^4+(19-10t)q^3+(-11-2t+t^2)q^2\\
  &&+(-7+21t-8t^2)q+(7t-8t^2+t^3),\nonumber
\end{eqnarray}
whence
\begin{theorem}
  Let $B$ be a \BS\ in the three-dimensional M\"obius geometry over $\ffq$.
  Then $\#B\ge 2q^2-q-2$.
  Furthermore, $\#B\ge2q^2-q-1$ for $q\ge4$, $\#B\ge2q^2-q$ for $q\ge7$, and $\#B\ge2q^2-q+1$
  for $q\ge19.$
\end{theorem}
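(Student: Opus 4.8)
The plan is to specialize the polynomial machinery of Theorem~\ref{principale} to the case at hand and then squeeze sharper numerical information out of the resulting inequality. A three-dimensional M\"obius geometry over $\ffq$ is $\Sigma(\ffq,F)$ with $F$ a field and $[F:\ffq]=3$; thus $R=F$ is local (its maximal ideal is $\{0\}$), $d=3$, and $R\setminus R^*=\{0\}$, so that $\delta=0$. I would therefore set $d=3$, $\delta=0$ in the master inequality (\ref{polinomione}). With $\theta_2=q^2+q+1$ and $q^\delta=1$ it collapses to the single explicit inequality
\[
  0\le x(x-1)(x-2)-5x(x-1)(q^2+q+1)+12xq^2(q^2+q+1)-12(q^4+q^2+1)q^2,
\]
where $x=\#B$. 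Putting $x=2q^2-q+t$ and expanding reproduces exactly the cubic-in-$t$ inequality displayed above; checking that identity (so that the degree-six and degree-five terms in $q$ cancel, leaving the leading term $(-1+4t)q^4$) is the first task.

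Write $f(q,t)$ for the right-hand side after substitution, a cubic in $t$ with leading coefficient $1$. The structural point that makes everything clean is monotonicity: a short discriminant computation for $\partial f/\partial t=3t^2+2(q^2-8q-8)t+(4q^4-10q^3-2q^2+21q+7)$ shows that this quadratic has no real root once $q\ge4$, so $f(q,\cdot)$ is then strictly increasing in $t$. Hence for every $q\ge4$ the admissible blocking-set sizes are exactly those with $t\ge t^*(q)$ for the unique real root $t^*(q)$, and it suffices to exclude a single boundary integer value of $t$ in order to exclude all smaller ones simultaneously. This reduces the whole problem to evaluating $f$ at $t=0,-1,-2,-3$.

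I would then carry out those four evaluations. Substituting $t=-3$ gives $f(q,-3)=-13q^4+49q^3+4q^2-142q-120<0$ for all $q\ge2$; by the monotonicity this excludes every $t\le-3$ for $q\ge4$, so $t\ge-2$ and $\#B\ge2q^2-q-2$. Next, $f(q,-2)=-9q^4+39q^3-3q^2-81q-54<0$ for $q\ge4$ while $f(3,-2)=0$, giving $\#B\ge2q^2-q-1$ for $q\ge4$ and pinpointing why $q=3$ must be excluded. Then $f(q,-1)=-5q^4+29q^3-8q^2-36q-16<0$ for $q\ge7$ whereas $f(5,-1)>0$, giving $\#B\ge2q^2-q$ for $q\ge7$. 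Finally $f(q,0)=-q\,(q^3-19q^2+11q+7)$ is negative precisely when $q^3-19q^2+11q+7>0$, which among integers first holds at $q=19$, yielding $\#B\ge2q^2-q+1$ for $q\ge19$.

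The main obstacle is not any individual computation but the bookkeeping at the boundaries. First, the monotonicity argument is valid only for $q\ge4$, so for the universal bound the two cases $q\in\{2,3\}$ must be settled by hand; there the range of admissible $t$ is tiny, since (\ref{elf}) already forces $\#B\ge q^2-q+1$, i.e.\ $t\ge1-q^2$, so only finitely many values need be tested, and each gives $f(q,t)<0$ apart from the single equality $f(3,-2)=0$. Second, every threshold in $q$ must be matched against the actual prime powers rather than against the real root of the corresponding quartic: I must confirm that the largest prime power for which a given $t$ stays admissible (namely $q=5$ for $t=-1$ and $q=17$ for $t=0$) lies strictly below the stated threshold, so that the non-prime-power integers in the gaps ($6$ and $18$) are irrelevant. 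Once these edge cases are checked, the four sign statements assemble into the stated chain of lower bounds.
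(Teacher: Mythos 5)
Your proposal is correct and follows essentially the same route as the paper: specialize (\ref{polinomione}) to $d=3$, $\delta=0$, substitute $x=2q^2-q+t$ to get the displayed quartic-in-$q$ inequality (your coefficients match the paper's exactly), and then read off the four bounds by sign checks at $t=-3,-2,-1,0$. The paper compresses all of this into a single ``whence''; your added monotonicity argument via the discriminant of $\partial f/\partial t$ for $q\ge4$, together with the finite check for $q\in\{2,3\}$ using (\ref{elf}), is a legitimate and careful way of filling in the details the paper leaves implicit.
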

\end{remark}

It is not clear whether there exist blocking sets of size near to
the lower bounds given in theorem \ref{principale}.
In \cite{GrRo02,Sz92} the existence of \BS s in the M\"obius planes of size $O(q\log q)$ is proved.
The following theorem allows to construct blocking sets in generalized chain geometries,
starting from \BS s in M\"obius geometries.
\begin{theorem}\label{ponte}
  Let $R$ be a local ring, and $F=R/(R\setminus R^*)$.
  If $\Sigma(\mathbb F_q,F)$ contains a \BS\ of size $x$, then
  $\Sigma(\mathbb F_q,R)$ contains a \BS\ of size $xq^\delta$ ($\delta$ as in (\ref{delta})).
\end{theorem}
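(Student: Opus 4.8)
The plan is to exhibit an explicit \emph{reduction} map from $\Sigma(\ffq,R)$ onto $\Sigma(\ffq,F)$ whose fibres are exactly the parallel classes, and then simply to take the full preimage of a blocking set of the quotient geometry. Since $R$ is local with maximal ideal $\mathfrak m=R\setminus R^*$, the quotient $F=R/\mathfrak m$ is a field; as $\ffq\cap\mathfrak m=\{0\}$ (a nonzero scalar is a unit), $\ffq$ embeds into $F$ and $[F:\ffq]=d-\delta$. Let $\pi\colon R\to F$ be the canonical epimorphism. Applied entrywise it yields $R^2\to F^2$ and a group homomorphism $\GL_2(R)\to\GL_2(F)$ (a ring homomorphism carries an invertible matrix to an invertible one, since $\pi(g)\pi(g^{-1})=\pi(I)=I$), hence a map $\pi\colon\mathbb{P}(R)\to\mathbb{P}(F)$, $R(a,b)\mapsto F(\pi a,\pi b)$. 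This is well defined precisely because $R$ is local: in any point at least one coordinate is a unit, and a unit stays nonzero under $\pi$, so the image is again a point. One checks directly the equivariance $\pi(R(a,b)\,g)=\pi(R(a,b))\,\pi(g)$, and that $\pi$ carries the embedded $\mathbb{P}(\ffq)\subseteq\mathbb{P}(R)$ onto the embedded $\mathbb{P}(\ffq)\subseteq\mathbb{P}(F)$.

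Next I would analyse the fibres. Using the description (\ref{morexpl}) of parallelism one sees that $\pi R(x,1)=\pi R(y,1)$ iff $x-y\in\mathfrak m$, and that all $R(1,z)$ with $z\in\mathfrak m$ collapse to the single point $F(1,0)$; thus the fibres of $\pi$ are exactly the parallel classes, each consisting of $\#[p]=q^\delta$ points. Combined with the equivariance, the image of a chain $\mathbb{P}(\ffq)^g$ is the chain $\mathbb{P}(\ffq)^{\pi(g)}$ of $\Sigma(\ffq,F)$. Since any two distinct points of a chain are distant, hence non-parallel, they lie in distinct fibres; as a chain of $\Sigma(\ffq,R)$ and its image both have $q+1$ points, the restriction of $\pi$ to any chain is a bijection onto a chain of $\Sigma(\ffq,F)$.

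Given these facts the conclusion is immediate. If $B_0$ is a blocking set of $\Sigma(\ffq,F)$ with $\#B_0=x$, set $B=\pi^{-1}(B_0)$. Because every fibre has $q^\delta$ points, $\#B=xq^\delta$. For an arbitrary chain $C$ of $\Sigma(\ffq,R)$ its image $\pi(C)$ is a chain of $\Sigma(\ffq,F)$, so $\pi(C)\cap B_0\neq\emptyset$; pulling a point of this intersection back through the bijection $\pi|_C$ gives a point of $C$ lying in $\pi^{-1}(B_0)=B$. Hence $B$ meets every chain and is a blocking set. The main obstacle is the middle step: showing that $\pi$ sends chains bijectively onto chains. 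This needs the equivariance together with $\pi(\GL_2(R))\subseteq\GL_2(F)$ to guarantee the image is a genuine chain, and the observation that distant points lie in distinct fibres to rule out any collapsing; everything else is bookkeeping with the fibre size $q^\delta$.
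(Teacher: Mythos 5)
Your proposal is correct and follows essentially the same route as the paper: it uses the canonical quotient map $\mathbb{P}(R)\to\mathbb{P}(F)$ induced by $R\to R/(R\setminus R^*)$, observes that chains map onto chains and that the fibres are exactly the parallel classes of size $q^\delta$, and then takes the full preimage of a blocking set of $\Sigma(\mathbb F_q,F)$. The paper states these two properties of the quotient map without proof, so your write-up simply supplies the verifications (well-definedness via locality, $\GL_2$-equivariance, fibres = parallel classes) that the paper leaves implicit.
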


\begin{proof}
  Let $I=R\setminus R^*$ and, for $R(a,b)\in\mathbb P(R)$,
  $R(a,b)^\varphi=F(a+I,b+I)$.
  We obtain a well-defined map $\varphi:\,\mathbb P(R)\rightarrow\mathbb P(F)$ such that
  (\itl i) if $C$ is a chain in $\Sigma(\mathbb F_q,R)$, then $C^\varphi$ is a chain in
  $\Sigma(\mathbb F_q,F)$, (\itl{ii}) for $p,q\in\mathbb P(R)$, it holds $p^\varphi=q^\varphi$
  if and only if $p\|q$. 
  By such properties, if $B$ is a \BS\ in $\Sigma(\mathbb F_q,F)$ with $\#B=x$, then $B=B_0^\varphi$,
  where $B_0$ can be chosen as the union of exactly $x$ parallel classes, each of size $q^\delta$.
  This $B_0$ is a \BS\  of size $xq^\delta$ in $\Sigma(\mathbb F_q,R)$.
\end{proof}

\begin{corollary}
  If $R$ is a local ring and $d=\dim_{\mathbb F_q}R=2+\delta$,
  then $\Sigma(\mathbb F_q,R)$ contains a \BS\ of size $O(q^{d-1}\log q)$.
\end{corollary}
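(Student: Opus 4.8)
The plan is to observe that the dimensional hypothesis $d=2+\delta$ pins down the residue field $F=R/(R\setminus R^*)$ to be a quadratic extension of $\ffq$, so that $\Sigma(\ffq,F)$ is a M\"obius plane; then to combine the cited existence of small blocking sets in M\"obius planes with the lifting mechanism of Theorem \ref{ponte}.

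First I would compute $\dim_{\ffq}F$. Since $R$ is local, $I:=R\setminus R^*$ is its maximal ideal (as recalled in Example~2.2), and decomposing $R$ as an $\ffq$-vector space through the quotient $R\to F=R/I$ gives $\dim_{\ffq}R=\dim_{\ffq}I+\dim_{\ffq}F$, that is, $d=\delta+\dim_{\ffq}F$. Under the hypothesis $d=2+\delta$ this forces $\dim_{\ffq}F=2$, so $F$ is a field with $[F:\ffq]=2$, i.e.\ $F\cong\mathbb F_{q^2}$. By the discussion in the introduction, $\Sigma(\ffq,F)$ is then a (Miquelian) M\"obius plane of order $q$, a $3$-$(q^2+1,q+1,1)$-design.

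Next I would invoke the existence result cited from \cite{GrRo02,Sz92}, namely that every M\"obius plane of order $q$ admits a \BS\ of size $O(q\log q)$. Applied to $\Sigma(\ffq,F)$ this furnishes a \BS\ $B$ with $\#B=O(q\log q)$. Finally I would feed this datum into Theorem \ref{ponte}: with $x=\#B=O(q\log q)$, that theorem produces a \BS\ in $\Sigma(\ffq,R)$ of size $xq^{\delta}$. Because $\delta=d-2$, we get $xq^{\delta}=O(q\log q)\cdot q^{d-2}=O(q^{d-1}\log q)$, which is the asserted bound.

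There is essentially no genuine obstacle here: the statement is a direct composition of the known M\"obius-plane bound with the bridge construction of Theorem \ref{ponte}, and the only step requiring attention is the elementary dimension bookkeeping that identifies $F$ with $\mathbb F_{q^2}$. The sole subtlety worth flagging is that the $O(q\log q)$ construction of \cite{GrRo02,Sz92} must apply to the particular plane $\Sigma(\ffq,\mathbb F_{q^2})$; this is unproblematic, since that plane is precisely the classical Miquelian M\"obius plane to which those references pertain.
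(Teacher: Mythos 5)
Your proposal is correct and follows exactly the route the paper intends: the corollary is stated immediately after Theorem \ref{ponte} precisely because it follows by combining that theorem with the cited $O(q\log q)$ blocking sets in M\"obius planes \cite{GrRo02,Sz92}, once the dimension count $d=\delta+\dim_{\mathbb F_q}F$ identifies the residue field $F$ with $\mathbb F_{q^2}$ and hence $\Sigma(\mathbb F_q,F)$ with the Miquelian M\"obius plane of order $q$. Your explicit bookkeeping of this identification is the only step the paper leaves implicit, and you have carried it out correctly.
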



\bibliographystyle{plain}

\end{document}